\newtheorem{theorem}{Theorem}
\newtheorem{lemma}[theorem]{Lemma}
\newtheorem{cor}[theorem]{Corollary}
 \newcommand{\F}{\mathbb{F}}
\newcommand{\K}{\mathbb{K}}
\def\\{\cr}
\def\({\left(}
\def\){\right)}
\def\<{\langle}
\def\>{\rangle}
\def\le{\leqslant}
\newcommand{\Fq}{\mathbb{F}_q}
\newcommand{\Div}[1]{\mathrm{Div}(#1)}
\newcommand{\ord}{\mathrm{ord}}
\renewcommand{\div}[1]{\mathrm{div}(#1)}
\renewcommand{\b}{\mathbf{b}}
\newcommand{\A}{\mathcal{A}}
\newcommand{\B}{\mathcal{B}}
\renewcommand{\P}{\mathbb{P}}
\newcommand{\E}{\mathcal{E}}
\newcommand{\V}{\mathcal{V}}
\newcommand{\x}{\mathbf{x}}
\newcommand{\y}{\mathbf{y}}
\renewcommand{\a}{\pmb{\alpha}}
\renewcommand{\b}{\pmb{\beta}}
\renewcommand{\Im}{\mathrm{Im}}
\newcommand{\cS}{\mathcal{S}}
\newcommand{\cG}{\mathcal{G}}
\newcommand{\cI}{\mathcal{I}}
\begin{document}

\title[Rational function values in small subgroups]
{Values of rational functions in small subgroups of finite fields and the identity testing  problem from powers}

\author{L\'aszl\'o M\'erai} 
\address{ Johann Radon Institute for Computational and Applied Mathematics, Austrian Academy of Sciences,  Altenberger Stra\ss e 69, A-4040 Linz, Austria
}
 \email{laszlo.merai@oeaw.ac.at}

\pagenumbering{arabic}

\subjclass[2000]{11T06, 11Y16, 68Q25}
\keywords{Identity testing from powers, hidden polynomial power, Nullstellensatz, deterministic algorithm}

\begin{abstract}
Motivated by some algorithmic problems, we give low\-er bounds on the size of the multiplicative groups containing rational function images of low-dimensional affine subspaces of a finite field~$\F_{q^n}$ considered as a linear space over a subfield~$\F_q$.
We apply this to the recently introduced algorithmic problem of identity testing of ``hidden'' polynomials $f$ and $g$ over a high degree extension of a  finite field, given oracle access to $f(x)^e$ and $g(x)^e$.

\end{abstract}

\maketitle

\section{Introduction}

Let $\F$ be a finite field. 
For a rational function $r(X)=f(X)/g(X)\in\F(X)$ with two relatively prime polynomials $f(X),g(X)\in \F[X]$ and a set 
$\cS\subset \F$, we use $r(\cS)$ to denote the value set
$$
r(\cS)=\{r(x): \ x\in\cS, g(x)\neq 0 \}\subset \F.
$$

Given $\cS\subset \F$ we consider the smallest possible $e=E_r(\cS)$ such that there is a multiplicative group $\cG$ of $ \F^*$ of size $e=\#\cG$ for which
$$
r(\cS)\subset \cG.
$$

Here, we are mostly interested in the case when $\F=\F_{q^n}$ is a high degree extension of a small field $\F_q$ and $\cS$ is a low-dimensional affine subspace. 

In the extension field $\F_{q^n}$, instead of intervals, we consider the following linear subspace
\begin{equation*}
 \V_m=\{a_0+a_1\alpha+\dots+a_{m-1}\alpha^{m-1}:a_0,a_1,\dots, a_{m-1}\in\Fq \}, \quad 1\leq m\leq n,
\end{equation*}
where $\F_{q^n}=\Fq(\alpha)$, and
we investigate $E_r(\V_m)$, with $r(X)\in\F_{q^n}(X)$.

In the prime field case,  G\'omez-P\'erez and Shparlinski \cite{GP-S15} and Shparlinski \cite{S16} provided lower bounds on $E_f(\cI)$ for intervals $\cI\subset \F_p$ and polynomials $f(X)\in\F_p[X]$.

In this paper we use a quantitative version of an effective Hilbert's Nullstellensatz in function fields given by D'Andrea, Krick and Sombra~\cite{DAKS} to study  this question in a different situation, i.e.\ when the finte field is a high degree extension of a small (fixed)  field and $\cS$ is a low-dimensional affine subspace, see Section~\ref{sec:product_set}. 
We adapt methods of Bourgain, Konyagin, Shparlinski \cite{BKS08}, however new ingredients are needed to handle the function field case.

We apply this result to the \emph{Identity Testing Problem from Powers}. Namely, for a ``hidden'' monic polynomial $f\in\F[X]$,  let $\mathfrak{D}_{e,f}$ be an oracle which on every input $x\in\F$ outputs $\mathfrak{D}_{e,f}(x)=f(x)^e$ for some large positive integer $e\mid \#\F-1$.
Then identity testing problem from powers is:

\begin{quotation}
Given  two oracles $\mathfrak{D}_{e,f}$ and $\mathfrak{D}_{e,g}$ for some unknown monic polynomials $f,g\in\F[X]$, decide whether $f=g$.
\end{quotation}

If $f/g$ is an $(\#\F-1)/e$-th power of a nonconstant rational function, then the oracles $\mathfrak{D}_{e,f}$ and $\mathfrak{D}_{e,g}$ give the same output for each input, therefore it is impossible to distinguish between $f$ and $g$ from the oracles $\mathfrak{D}_{e,f}$ and $\mathfrak{D}_{e,g}$. We write $f\sim_e g$ in this case, and $f \not\sim_e g$ otherwise. 

For polynomials $f\not\sim_e g$ with degree $d$ there is a naive algorithm which calls the oracles on $ed+1$ different inputs and returns $f=g$ if these oracles agree on every input (as for $f\not\sim_e g$, $f(X)^e=g(X)^e$ has at most $ed$ solutions).

For linear polynomials $f(X)=X+s$ and $g(X)=X+t$, this problem is called \emph{hidden shifted power problem}. 
%In this case there is a naive algorithm which calls the oracles on $e+1$ different input and return $f=g$ is these oracles agree on every input (as for $t\neq s$, $(X+s)^e=(X+t)^t$ has at most $e$ solutions).
The naive algorithm has been improved by Bourgain, Garaev, Konyagin and Shparlinski~\cite{BGKS12} and Shparlinski~\cite{S14}. 
% For example, they showed an algorithm which solves the hidden shifted power problem if  $q=p$ is prime number number. If $e=p^{o(1)}$ and $e\rightarrow \infty$, then using their algorithm one can test whether $s=t$ in time $e^{o(1)}(\log p)^{O(1)}$. Shparlinski~\cite{S14} also considered the problem with \emph{known} $t$ over arbitrary finite fields.

For prime fields $\F_p$, Ivanyos, Karpinski, Santha, Saxena and Shparlinski \cite{IKSSS} extended the results of  \cite{BGKS12} to arbitrary (non linear) polynomials. For a fixed degree $d$  and $e\rightarrow \infty$ if $e=p^{o(1)}$ one can test whether $f=g$ in time $e^{o(1)}(\log p)^{O(1)}$ in $e^{o(1)}$ oracle calls.

Here we consider the identity testing problem in a different situation, i.e. when the finite field  is an extension of a small (fixed) field with large extension degree. We prove that for a fixed degree $d$ and $e\rightarrow \infty$ if $e=q^{o(n)}$, then one can test whether $f=g$ for polynomials $f(X),g(X)\in\F_{q^n}[X]$ in time $e^{o(1)}(\log q^n)^{O(1)}$ in $e^{o(1)}$ oracle calls, see Section~\ref{sec:indentity}.

\section{Auxiliary results}

% The proof of Theorem~\ref{thm:1} is based on a quantitative version of effective Hilbert's Nullstellensatz in function fields given by D'Andrea, Krick and Sombra~\cite{DAKS}.

Throughout the paper we use the Landau symbol $O$ and the Vinogradov symbol $\ll$. 
Recall that the
assertions $U=O(V)$ and $U \ll V$  are both 
is equivalent to the inequality $|U|\le cV$ with some absolute constant $c>0$.
 To emphasize the dependence of the implied
constant $c$ on some parameter (or a list of parameters) $\rho$, we write $U=O_{\rho}(V)$ or $U \ll_{\rho} V$.

A polynomial $P\in\Fq[T][X_1,\dots, X_n]$ is said to have \emph{height $h$} if $P$ has local degree $h$ in $T$. If $\K$ is a finite extension of $\F_q(T)$, then we define the \emph{height} of $\alpha\in \K$ as the height of its minimal polynomial over $\F_q[T]$.

Clearly, if $\alpha$ and $\beta$ have height at most $h$, then $c \alpha$, $\alpha^{-1}$, $\alpha+\beta$ and $\alpha \cdot \beta$ have height  $O_d(h)$, where $d$  is the degree of the extension $\K/\F_q(T)$.

\subsection{Effective Hilbert's Nullstellensatz}

The next statement is a simplified version of \cite[Theorem~4.22]{DAKS}.

\begin{lemma}\label{lemma:Hilbert}
 Let $P_1,\dots, P_N\in\Fq[T][X_1,\dots, X_n]$ be $N\geq 1$ polynomials of degree  $\deg_{X_1,\dots, X_n}P_i\leq r$ ($i=1,\dots, N$) with height at most $s$. Let $R\in\Fq[T][X_1,\dots, X_n]$ with $t=\max\{1,\deg_{X_1,\dots, X_n}R\}$ and height $u$ such that $R$ vanishes on the variety
 \begin{equation*}
  P_1(X_1,\dots, X_n)=\dots=P_N(X_1,\dots, X_n)=0.
 \end{equation*} 
 
 Then there exist polynomials $Q_1,\dots,Q_N\in\Fq[T][X_1,\dots, X_n]$, $A\in\Fq[T]\setminus\{0\}$ and a positive integer $\mu\in\mathbb{N}$ such that
 \begin{equation*}
  P_1Q_1+\dots+P_NQ_N=A R^{\mu}
 \end{equation*}
and $A$ has height
 \begin{equation*}
\deg_T A\ll  \, r^{\min\{N,n+1 \}}\left( \frac{ u}{t}
+ \sum_{i=1}^{\min\{N,n+1\}}\frac{ s }{\deg_{X_1\dots, X_N}P_i} \right).
 \end{equation*}
 \end{lemma}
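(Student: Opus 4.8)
The plan is to obtain Lemma~\ref{lemma:Hilbert} as a direct specialization of the arithmetic Nullstellensatz of D'Andrea, Krick and Sombra \cite[Theorem~4.22]{DAKS} to the function field $\Fq(T)$, equipped with the height coming from the degree valuation at infinity; for a polynomial in $\Fq[T][X_1,\dots,X_n]$ this intrinsic height is just the $T$-degree, so it coincides with the notion of \emph{height} defined above, and likewise for algebraic elements via their minimal polynomials (in the function-field case there is moreover no archimedean contribution, so the DAKS bound takes its cleanest form). The first step is to record that the hypothesis ``$R$ vanishes on the variety $P_1=\dots=P_N=0$'' is read over the algebraic closure $\overline{\Fq(T)}$, hence, by Hilbert's Nullstellensatz over the field $\Fq(T)$, is equivalent to a radical membership $R^{\mu}\in (P_1,\dots,P_N)$ in $\Fq(T)[X_1,\dots,X_n]$; clearing denominators turns an identity $R^{\mu}=\sum_i P_i\widetilde Q_i$ with $\widetilde Q_i\in\Fq(T)[X_1,\dots,X_n]$ into one of the form $P_1Q_1+\dots+P_NQ_N=AR^{\mu}$ with $Q_i\in\Fq[T][X_1,\dots,X_n]$ and $A\in\Fq[T]\setminus\{0\}$. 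The whole content of the lemma is then the quantitative bound on $\deg_T A$.

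Next I would quote the effective form of \cite[Theorem~4.22]{DAKS} in this setting. Ordering the generators so that $\deg_{X}P_1\ge\dots\ge\deg_{X}P_N$, their theorem produces $Q_i$, $A$ and $\mu$ as above together with a bound on $\deg_T A$ (and on the heights and $X$-degrees of the $Q_i$, which are irrelevant here and will be discarded) of the shape
\begin{equation*}
\deg_T A \;\ll\; \Big(\prod_{i=1}^{\min\{N,n+1\}}\deg_{X}P_i\Big)\left(\frac{u}{t}+\sum_{i=1}^{\min\{N,n+1\}}\frac{s_i}{\deg_{X}P_i}\right),
\end{equation*}
where $s_i$ is the height of $P_i$, the product being over the $\min\{N,n+1\}$ generators of largest $X$-degree; the appearance of $n+1$ rather than $n$ is the usual effect of passing to the projective closure (adjoining a generic linear form, equivalently the hyperplane at infinity) in the affine, possibly overdetermined, situation, and the implied constant is absolute after the combinatorial bookkeeping in \cite{DAKS}.

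Finally I would simplify. Using $\deg_{X}P_i\le r$, $s_i\le s$, and $t=\max\{1,\deg_{X}R\}\ge 1$, each factor in the Bézout product is at most $r$, so $\prod_{i=1}^{\min\{N,n+1\}}\deg_{X}P_i\le r^{\min\{N,n+1\}}$, while the parenthesised sum is at most $\tfrac{u}{t}+\sum_{i=1}^{\min\{N,n+1\}}\tfrac{s}{\deg_{X}P_i}$; substituting yields exactly
\begin{equation*}
\deg_T A \ll r^{\min\{N,n+1\}}\left(\frac{u}{t}+\sum_{i=1}^{\min\{N,n+1\}}\frac{s}{\deg_{X_1,\dots,X_n}P_i}\right).
\end{equation*}

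The main obstacle I expect is one of translation rather than of new mathematics: one must verify that the height normalisation used in \cite{DAKS} (phrased there in a general arithmetic framework) specialises over $\Fq(T)$ to $\deg_T$ with the correct constants, and that the various lower-order and mixed terms present in the bound of \cite[Theorem~4.22]{DAKS} are genuinely absorbed into the single product $r^{\min\{N,n+1\}}$ and the displayed sum once all the relevant quantities are taken $\ge 1$. Keeping track of which generators enter the Bézout product (the $\min\{N,n+1\}$ of largest $X$-degree) and of why the affine inhomogeneous case costs the extra factor is the only delicate point in the bookkeeping.
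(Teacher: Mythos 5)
Your proposal is correct and follows essentially the same route as the paper: the paper gives no independent argument for this lemma, presenting it simply as a simplified specialization of \cite[Theorem~4.22]{DAKS} to $\Fq[T][X_1,\dots,X_n]$, which is exactly what you carry out (identifying the DAKS height over $\Fq(T)$ with $\deg_T$, clearing denominators to get $P_1Q_1+\dots+P_NQ_N=AR^{\mu}$, and coarsening the Bézout-type bound via $\deg_X P_i\le r$ to the stated $r^{\min\{N,n+1\}}$ form).
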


 We also need the following result whose proof is identical to the proof of \cite[Lemma~2.14]{Chang}.

\begin{lemma}\label{lemma:Chang}
 Let $P_1,\dots, P_N,R\in\Fq[T][X_1,\dots, X_n]$ be $N+1$ polynomials of degree  $\deg_{X_1,\dots, X_n}P_i\leq r$ ($i=1,\dots, N$), $\deg_{X_1,\dots, X_n}R\leq r$ and height at most $s$.
 If the set
 \begin{equation*}
  P_1(X_1,\dots, X_n)=\dots=P_N(X_1,\dots, X_n)=0 \ \text{and} \ R(X_1,\dots, X_n)\neq 0
 \end{equation*}
is not empty, then it has a point $(\beta_1,\dots, \beta_n)$ in an extension $\K$ of $\Fq(T)$ of degree $[\K:\Fq(T)]\leq \Delta(q,r,N,n)$ such that the height of $\beta_i$ ($i=1,\dots, n$) is at most $C(q,r,n,N)s$ where $\Delta(q,r,n,N)$ and $C(q,r,n,N)$ depend only on $q$, $r$, $n$ and $N$.
\end{lemma}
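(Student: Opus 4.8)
The plan is to transcribe the proof of \cite[Lemma~2.14]{Chang}, replacing the absolute height over $\mathbb Z$ by the degree in $T$ over $\Fq[T]$ and a number field of bounded degree by a finite extension of $\Fq(T)$ of bounded degree. First I would remove the inequality $R\neq0$ by a Rabinowitsch substitution: adjoin a variable $X_0$, set $P_0=X_0R-1$ (which has $X$-degree at most $r+1$ and height at most $s$), and observe that a tuple $(\beta_1,\dots,\beta_n)$ solves $P_1=\dots=P_N=0$ with $R\neq0$ if and only if $\bigl(R(\beta_1,\dots,\beta_n)^{-1},\beta_1,\dots,\beta_n\bigr)$ lies on the variety $W\subseteq\overline{\Fq(T)}^{\,n+1}$ cut out by $P_0=P_1=\dots=P_N=0$; conversely every point of $W$ has $X_0$-coordinate $R(X_1,\dots,X_n)^{-1}\neq0$, so it suffices to produce a point of $W$ of controlled degree and height. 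By hypothesis $W\neq\emptyset$. Next I would choose an irreducible component $Z$ of $W$, say of dimension $\delta\le n$, and cut it down with $\delta$ affine hyperplanes $\sum_{j=0}^{n}a_{ij}X_j=b_i$ whose coefficients $a_{ij},b_i$ I may take in $\Fq[T]$ of $T$-degree bounded in terms of $r$, $n$, $N$; a Schwartz--Zippel style count over $\Fq$ guarantees a choice for which the intersection $Z'$ is zero-dimensional and nonempty. Fixing any $\beta=(\beta_0,\beta_1,\dots,\beta_n)\in Z'$, the tuple $(\beta_1,\dots,\beta_n)$ then lies in the set of the statement.

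For the field of definition, the enlarged system formed by $P_0,\dots,P_N$ and the $\delta$ hyperplane equations has $X$-degree $O(r)$ and height $\le s$, and its zero-dimensional part has degree bounded via B\'ezout in terms of $r$, $n$, $N$ alone; hence $\beta$ generates over $\Fq(T)$ an extension $\K$ with $[\K:\Fq(T)]\le\Delta(q,r,n,N)$. For the heights, for each $k\in\{1,\dots,n\}$ I would eliminate the other variables from the enlarged system, obtaining via resultants a nonzero $g_k\in\Fq[T][X_k]$ with $g_k(\beta_k)=0$, $\deg_{X_k}g_k\le\Delta(q,r,n,N)$, and $\deg_T g_k\ll_{q,r,n,N}s$. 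The last bound is the quantitative heart of the matter: it follows either from the fact that each resultant is the determinant of a matrix whose entries are the coefficients of the polynomials involved (so the $T$-degree grows only by a factor depending on $r$ and $n$), or from applying the effective Nullstellensatz of Lemma~\ref{lemma:Hilbert} to place a multiple of $g_k$ inside the ideal generated by the enlarged system, with a scalar multiplier $A\in\Fq[T]\setminus\{0\}$ of height $\ll_{q,r,n,N}s$. Finally, the minimal polynomial $m_k$ of $\beta_k$ over $\Fq(T)$ divides $g_k$ in $\Fq[T][X_k]$ by Gauss's lemma, and $\deg_T$ is additive on products in $\Fq[T][X_k]$ since $\Fq[X_k]$ is a domain, so $\deg_T m_k\le\deg_T g_k$; hence the height of $\beta_k$ is $\ll_{q,r,n,N}s=C(q,r,n,N)s$.

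I expect the difficulties to be bookkeeping rather than conceptual. The two delicate points are: carrying out the ``generic hyperplane'' reduction over the \emph{finite} field $\Fq$, where one cannot pick generic coefficients inside $\Fq$ itself but must allow polynomials in $T$ of bounded degree and estimate the inadmissible choices; and ensuring that every height bound stays \emph{linear} in $s$ through the successive eliminations and the passage from $g_k$ to $m_k$ --- which is exactly the place where Lemma~\ref{lemma:Hilbert}, or the determinantal description of the resultant, becomes indispensable.
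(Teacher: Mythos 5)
The paper itself contains no written proof of this lemma: it simply asserts that the argument is identical to that of \cite[Lemma~2.14]{Chang}, with $\mathbb{Z}$ replaced by $\Fq[T]$ and number fields by finite extensions of $\Fq(T)$. Your reconstruction --- Rabinowitsch substitution to absorb the condition $R\neq 0$, generic affine hyperplane sections to reach dimension zero, elimination with height bookkeeping, and Gauss's lemma over $\Fq[T]$ to pass from an eliminant $g_k$ to the minimal polynomial of $\beta_k$ --- is exactly the kind of elimination-theoretic argument being invoked, and in outline it is sound; the observation that $\Fq[T]$ has elements of arbitrarily large degree (so ``generic'' coefficients of bounded $T$-degree exist, contributing only $O_{q,r,n,N}(1)$ to heights) is the right way to handle the finiteness of $\Fq$.

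Two concrete soft spots. First, your fallback justification via Lemma~\ref{lemma:Hilbert} is circular: that lemma takes as input a polynomial $R$ already known to vanish on the variety and bounds only the height of the scalar multiplier $A$; it neither produces an eliminant $g_k$ nor bounds the height of $g_k$, so it cannot be the source of the bound $\deg_T g_k\ll_{q,r,n,N}s$. You must rely on the determinantal description of (iterated) resultants, or on height bounds for Chow forms and eliminants --- which is precisely the content of \cite{DAKS}, valid over $\Fq[T]$ as well. Second, cutting only the chosen component $Z$ with $\delta$ hyperplanes leaves the other components of $W$ inside your ``enlarged system''; if one of those still has positive dimension and dominates the $X_k$-axis, then no nonzero $g_k\in\Fq[T][X_k]$ vanishes on the projection of the enlarged variety, and a naive iterated resultant can vanish identically. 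The standard repair is either to cut the whole variety $W$ with $\dim W$ generic hyperplanes (lower-dimensional components then become empty, top-dimensional ones leave a nonempty finite set), or to eliminate using the Chow form of the zero-dimensional set $Z'$ itself; in both cases degrees are controlled by B\'ezout in terms of $r,n,N$ and heights stay linear in $s$. With these two repairs your argument goes through and matches the intended transposition of Chang's proof.
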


\subsection{Divisors in function fields}

Let $\K$ be a finite extension of $\F_q(T)$, let $\P_\K$ be the set of places of $\K$ and let $\Div{\K}$ be the set of divisors of $\K$. For a background of divisors of function fields we refer to \cite{Stichtenoth}.

For divisors $D,E\in\Div{\K}$ with
\begin{equation}\label{eq:div}
  D=\sum_{\substack{P\in\P_\K}}n_P P \quad \text{and} \quad  E=\sum_{\substack{P\in\P_\K}}m_PP,
\end{equation}
we say that $D\leq E$ if $n_P\leq m_P$ for all $P$. Specially, $D$ is an \emph{effective} or \emph{positive} divisor, $D\geq 0$, if $n_P\geq 0$ for all $P$.
For a divisor $D\in\Div{\K}$ of form \eqref{eq:div} we define
\begin{equation*}
 D_0=\sum_{\substack{P\in\P_\K\\ n_P>0}}n_P P \quad \text{and} \quad  D_\infty=-\sum_{\substack{P\in\P_\K\\ n_P<0}}n_PP.
\end{equation*}
Clearly, $D=D_0-D_\infty$ with $D_0,D_\infty\geq 0$. Moreover, for divisors $D,E\in\Div{\K}$ of form \eqref{eq:div} let
\begin{equation*}
 \min\{D,E\}=\sum_{P\in\P_\K} \min\{n_P,m_P\} P\in\Div{\K}.
\end{equation*}

The degree of a place $P$ is denoted by $\deg P$ and the degree of a divisor is defined by
\[
 \deg D=\sum_{P\in\P_\K}n_P \cdot \deg P.
\]

For a function $f\in\K$ we define its divisor $(f)$ as
\[
  (f)=\sum_{\substack{P\in\P_\K}}\ord_P(f) P \in\Div{\K},
\]
where $\ord_P(f)$ is the order of $f$ at $P$. 
% We also define $\deg f$ as the degree of the pole divisor of $f$,
% \[
%  \deg f = -\sum_{\substack{P\in\P_\K\\ \ord_P(f)<0}}\ord_P(f).
% \]

A divisor $D$ of form \eqref{eq:div} has height $h$ if $h=\max\{|n_P|: \ P\in\P_\K \}$.

\begin{lemma}\label{lemma:height-deg}
Let $\K$ be an extension of $\F_q(T)$ of degree $d=[\K:\Fq(T)]$. If $f\in\K$ has height $h$, then $\deg (f)_\infty$  is at most $2dh$.%and the height of its divisor $(f)$  is at most $d h$.
\end{lemma}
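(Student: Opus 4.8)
The plan is to bound $\deg(f)_\infty$ by controlling the places where $f$ has a pole, using the relation between $f$ and its minimal polynomial over $\F_q(T)$. Let $h$ be the height of $f$, so the minimal polynomial $m_f(Y)\in\F_q[T][Y]$ of $f$ has local degree at most $h$ in $T$, i.e.\ writing $m_f(Y)=c_0(T)Y^k+c_1(T)Y^{k-1}+\dots+c_k(T)$ with $k\le d$, each $c_i(T)\in\F_q[T]$ has $\deg_T c_i\le h$. First I would recall that the poles of $f$ in $\K$ can only lie over the poles of $f$ viewed in $\F_q(T)$ through its minimal equation, together with the places lying over $T=\infty$; more precisely, a place $P$ with $\ord_P(f)<0$ must lie above a place $v$ of $\F_q(T)$ at which $f$ fails to be integral over the valuation ring $\mathcal{O}_v$, and by the standard criterion this forces $v$ to divide the leading coefficient $c_0(T)$ or to be the infinite place of $\F_q(T)$.

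The key step is then a degree count. From $m_f(f)=0$ we get $c_0(T)f^k=-\bigl(c_1(T)f^{k-1}+\dots+c_k(T)\bigr)$, so at any place $P$ lying over a finite place $v\nmid c_0$ the element $f$ is integral at $P$ and contributes nothing to $(f)_\infty$. For the finite places $v\mid c_0$, the divisor $(c_0(T))_0$ in $\F_q(T)$ has degree $\deg_T c_0\le h$, and its pullback to $\K$ has degree at most $d\cdot\deg_T c_0\le dh$ (the degree multiplies by $[\K:\F_q(T)]=d$ under conorm). For the infinite place of $\F_q(T)$, the fibre in $\K$ again has total degree $d$, and the order of pole of $f$ there is controlled: clearing denominators and comparing $T$-degrees in $m_f(f)=0$ shows $-\ord_\infty^{\K}(f)\le e_\infty\cdot(\text{something bounded by }h)$ on each place above $\infty$, contributing at most $dh$ in total when summed with multiplicities. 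Adding the two contributions gives $\deg(f)_\infty\le 2dh$.

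The main obstacle I anticipate is making the ``integral outside $c_0$ and $\infty$'' statement and the ramification bookkeeping fully precise: one must be careful that $f$ may have genuine poles over places dividing $c_0$, and that the local pole order there is bounded by the ramification index times the $T$-adic valuation of $c_0$ — this is exactly where the factor $d$ (absorbing both the residue degree and the ramification index, whose product over a fibre is $d$) enters, and where one uses the fundamental identity $\sum_{P\mid v}e(P\mid v)f(P\mid v)=d$. Similarly at $\infty$ one needs that the pole order of $f$ is at most $h$ times the ramification index, which follows by substituting $T\mapsto 1/S$ and re-examining the minimal polynomial. Once these local bounds are in hand, the global bound $\deg(f)_\infty\le 2dh$ is just summation, and it is plausibly not tight, but the factor $2d$ is all that is needed in the sequel.
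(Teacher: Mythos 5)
Your proposal is correct and is essentially the paper's argument in expanded, place-by-place form: the paper likewise works from the defining equation $A_e f^e+\dots+A_1f+A_0=0$ over $\F_q[T]$ with coefficients of $T$-degree at most $h$, and the bound $2dh$ arises there exactly as in your split, namely $dh$ from the zeros of the leading coefficient and $dh$ from the places over $T=\infty$ (using $\deg (T)_\infty\le d$). Your explicit local bookkeeping with the fundamental identity is just a fleshed-out version of the paper's compressed divisor-degree estimate, so there is no substantive difference in route.
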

\begin{proof}
For all places $P\in\P_\K$ we have $\ord_P(T)\leq d$ and $\deg (T)_\infty\leq d$ (see e.g. \cite[I.3.3~Proposition]{Stichtenoth}).
Write
\[
 A_e f^e+\dots+A_1 f +A_0=0 \quad \text{with } A_e,\dots, A_1,A_0\in \F_q[T], \deg A_i \leq h
\]
with $A_e,A_0\neq 0$. 

We have $\deg (A_j)_\infty\leq dh$, thus
\[
 e\cdot \deg (f)_\infty \leq \max_{0\leq j<e}\{j\cdot \deg (f)_\infty + \deg (A_j/A_e)_\infty  \}\leq (e-1)\deg (f)_\infty +2dh,
\]
thus $\deg (f)_\infty\leq 2dh$.
% 
% Furthermore for all $P\in\P_\K$, 
% \[
%  \ord_P(f) \leq  \ord_P(A_e f^e+\dots+A_1 f) = \ord_P(A_0) \leq h \cdot \ord_P(T).
% \]
% Applying the same argument to $f^{-1}$, we get $|\ord_P(f)|\leq h \cdot \ord_P(T)$ so $(f)$ has height at most $dh$.
\end{proof}

\begin{lemma}\label{lemma:divisor-bound}
Let $\K$ be an extension of $\F_q(T)$ of degree $d=[\K:\Fq(T)]$. The number of effective divisors of degree at most $r$ is at most $d^rq^{2r}$.
\end{lemma}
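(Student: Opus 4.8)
The plan is to push an effective divisor $D$ of $\K$ down to the rational subfield $\Fq(T)$, and to bound separately the resulting divisor of $\Fq(T)$ and the number of $D$'s mapping to it. Write $d=[\K:\Fq(T)]$. For a place $Q$ of $\Fq(T)$ and a place $P$ of $\K$ above $Q$ one has $\deg P=f(P\mid Q)\deg Q$ with $f(P\mid Q)\ge1$ an integer, and there are at most $d$ places of $\K$ above $Q$ (fundamental inequality, \cite{Stichtenoth}). Given $D=\sum_P n_P P\ge0$ with $\deg D\le r$, put $D_Q=\sum_{P\mid Q}n_P P$, so $D=\sum_Q D_Q$ and $\deg D=\sum_Q\deg D_Q$; since $\deg D_Q=\big(\sum_{P\mid Q}n_P f(P\mid Q)\big)\deg Q$, the quantity $w_Q:=\deg D_Q/\deg Q=\sum_{P\mid Q}n_P f(P\mid Q)$ is a nonnegative integer, and $E(D):=\sum_Q w_Q\,Q$ is an effective divisor of $\Fq(T)$ with $\deg E(D)=\sum_Q w_Q\deg Q=\deg D\le r$. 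So it is enough to bound the number of effective divisors $E$ of $\Fq(T)$ of degree at most $r$, and the number of $D$ with $E(D)=E$.

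Fix $E=\sum_Q w_Q\,Q$ with $\deg E\le r$. A divisor $D$ with $E(D)=E$ amounts to a choice, for each $Q$ in the support of $E$, of nonnegative integers $(n_P)_{P\mid Q}$ with $\sum_{P\mid Q}n_P f(P\mid Q)=w_Q$. Each factor $(1-x^{f})^{-1}$ with $f\ge1$ has coefficients in $\{0,1\}$, hence at most those of $(1-x)^{-1}$; comparing coefficients in $\prod_{P\mid Q}(1-x^{f(P\mid Q)})^{-1}$ with $(1-x)^{-d}$ and using that there are at most $d$ places above $Q$ shows that the number of such tuples is at most $\binom{w_Q+d-1}{w_Q}\le d^{\,w_Q}$ (the $j=0$ factor in $\prod_{j=0}^{w_Q-1}\frac{d+j}{j+1}$ equals $d$, the others are $\le d$). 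Multiplying over $Q$, the number of $D$ with $E(D)=E$ is at most $\prod_Q d^{\,w_Q}=d^{\sum_Q w_Q}\le d^{\,r}$, since $\sum_Q w_Q\le\sum_Q w_Q\deg Q=\deg E\le r$.

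Finally, the places of $\Fq(T)$ are the infinite place $P_\infty$ of degree $1$ together with one place of degree $\deg\pi$ for each monic irreducible $\pi\in\Fq[T]$; an effective divisor of degree $n$ is thus determined by its multiplicity $b$ at $P_\infty$, with $0\le b\le n$, and by a monic polynomial of degree $n-b$ (its finite part), so there are $\sum_{b=0}^{n}q^{\,n-b}=(q^{n+1}-1)/(q-1)$ of them. Summing over $0\le n\le r$ gives at most $q^{2r}$ effective divisors of $\Fq(T)$ of degree at most $r$: this is immediate for $r\le1$, and for $r\ge2$ it follows from $\sum_{n=0}^{r}(q^{n+1}-1)/(q-1)<q^{r+2}\le q^{2r}$. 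Combining with the fibre bound, the number of effective divisors of $\K$ of degree at most $r$ is at most $d^{\,r}q^{2r}$.

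There is no genuine obstacle here; the points to watch are the degree identity $\deg P=f(P\mid Q)\deg Q$ together with $\sum_{P\mid Q}e(P\mid Q)f(P\mid Q)\le d$, and the use of the generating-function comparison rather than a cruder stars-and-bars count, which is what keeps the local contribution down to $d^{\,w_Q}$ and the final constant equal to the stated $d^{\,r}$ (rather than, say, $(d+1)^{r}$).
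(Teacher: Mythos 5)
Your proof is correct and follows essentially the same strategy as the paper: push effective divisors of $\K$ down to $\Fq(T)$, bound each fibre by $d^{r}$, and bound the number of effective divisors of $\Fq(T)$ of degree at most $r$ by $q^{2r}$. The only differences are in the details — you use the degree-preserving (residue-degree-weighted) pushforward and justify the fibre bound via the estimate $\binom{w+d-1}{w}\le d^{w}$, and you count effective divisors of $\Fq(T)$ directly through monic polynomials and the multiplicity at infinity, whereas the paper uses the unweighted pushforward and counts via pole divisors of rational functions — so your write-up is, if anything, a tighter execution of the same argument.
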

\begin{proof}
For a place $P\in\P_\K$ let $\pi(P)=P\cap \Fq(T)\in\P_{\Fq(T)}$ be the place of $\Fq(T)$ lies under $P$. By \cite[III.1.12. Corollary]{Stichtenoth},  there are at most $d$ preimages of any place of $\Fq(T)$ under the map $\pi$, moreover  $\deg P\leq d \deg \pi(P)$. We extend the map $\pi:\P_{\K}\rightarrow\P_{\Fq(T)}$ additively to $\pi:\Div{\K}\rightarrow\Div{\Fq(T)}$. The image of every effective $D$ divisor is also effective of degree at most $\deg D$, moreover any divisor of $\Fq(T)$ of degree $r$ has at most $d^r$ preimages under $\pi$.

For any effective divisor $D'$ of $\Fq(T)$, there is a $z\in\Fq(T)$ such that $D=(z)_\infty$

Estimating the number of rational functions whose numerator and denominator have degree at most $r$ by $q^{2r}$ we get that the number of effective divisors of $\K$ of degree at most $r$ is at most $d^r\cdot q^{2r}$.
\end{proof}

For an effective divisor $D \in\Div{\K}$ define $\tau(D)$ as 
\begin{equation*}
 \tau(D)=\#\{E\in\Div{\K} : 0\leq E \leq D\}.
\end{equation*}

We have the following result which is analog to the classical divisor function of integers.

\begin{lemma}\label{lemma:tau}
 Let $\K$ be an extension of $\F_q(T)$ of degree $d=[\K:\Fq(T)]$. For any effective divisor $D\in\Div{\K}$ we have
 \begin{equation*}
  \tau(D)\leq \exp\left(O_{q,d}\left( \, \frac{\deg D}{\log \deg D}\right)  \right).
 \end{equation*}
%where $c(q,d)$ depends only on $q$ and $d$. 
\end{lemma}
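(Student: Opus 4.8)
The plan is to transcribe the classical bound $\tau(n)\le\exp\!\big(O(\log n/\log\log n)\big)$ for the divisor function of an integer into the function field setting: "primes $p$" become "places $P$", the weight $\log p$ becomes $\deg P$, and the trivial count $\pi(y)\le y$ of small primes is replaced by the count of low-degree places furnished by Lemma~\ref{lemma:divisor-bound}.

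First I would record the exact value of $\tau$. Writing $D=\sum_{P}n_PP$ with all $n_P\ge 0$, an effective divisor $E$ with $0\le E\le D$ amounts to choosing, independently at each place, an integer in $\{0,1,\dots,n_P\}$, so
\[
\tau(D)=\prod_{P\in\P_\K}(n_P+1)=\prod_{P:\,n_P>0}(n_P+1),
\qquad
\log\tau(D)=\sum_{P:\,n_P>0}\log(n_P+1).
\]
I will use two elementary inequalities: $\log(n+1)\le n$ for every integer $n\ge 0$, and $n_P\le n_P\deg P\le\deg D$ for every place $P$.

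Next, fix a parameter $L\in\mathbb N$ (chosen at the end) and split the last sum according to whether $\deg P\ge L$ or $\deg P<L$. For a place with $\deg P\ge L$ one has $\log(n_P+1)\le n_P\le\frac1L\,n_P\deg P$, and summing over all such places contributes at most $\frac1L\deg D$. A place with $\deg P<L$ is itself an effective divisor of degree at most $L-1$, so by Lemma~\ref{lemma:divisor-bound} there are at most $d^{L-1}q^{2(L-1)}\le(dq^2)^L$ of them, and since $n_P\le\deg D$ each contributes at most $\log(\deg D+1)$; together they contribute at most $(dq^2)^L\log(\deg D+1)$. Hence
\[
\log\tau(D)\;\le\;\frac{1}{L}\,\deg D\;+\;(dq^2)^L\log(\deg D+1).
\]

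Finally I would choose $L=\fl{\dfrac{\log\deg D}{2\log(dq^2)}}$; we may assume $\deg D$ is large enough in terms of $q$ and $d$ that $L\ge 1$, since for the remaining finitely many values of $\deg D$ the crude estimate $\tau(D)\le(dq^2)^{\deg D}$ already gives the claim after enlarging the implied constant. With this choice $(dq^2)^L\le(\deg D)^{1/2}$, so the second term is $\ll(\deg D)^{1/2}\log\deg D$, while $1/L\ll_{q,d}1/\log\deg D$ makes the first term $\ll_{q,d}\deg D/\log\deg D$; since $(\deg D)^{1/2}\log\deg D\ll\deg D/\log\deg D$ for $\deg D$ large, both terms are $\ll_{q,d}\deg D/\log\deg D$, which is the assertion. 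I do not foresee any real obstacle here: the argument is bookkeeping around the split, and the only genuinely function-field input — counting low-degree places — is needed only in the very weak form of Lemma~\ref{lemma:divisor-bound} (in particular no appeal to the function field prime number theorem $\#\{P:\deg P=m\}\ll q^m/m$ is required), because it is the growth of the \emph{degrees} of the small places, not merely their number, that drives the estimate.
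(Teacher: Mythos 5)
Your proof is correct and follows essentially the same route as the paper's: both adapt the classical Hardy--Wright divisor-function argument by splitting the places at a degree threshold, bounding the high-degree contribution by $\deg D$ over the threshold, and counting the low-degree places via Lemma~\ref{lemma:divisor-bound}; your additive split at $\deg P<L$ is just the logarithmic form of the paper's normalization by $\exp(\varepsilon\deg D)$ with $\varepsilon\asymp 1/L$. If anything, your write-up is slightly tidier in that it explicitly disposes of the bounded-$\deg D$ case, which the paper leaves implicit.
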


\begin{proof}
We can prove the result in the same line as \cite[Theorem 317]{HardyWright}.

Clearly
\begin{equation*}
 \tau(D)=\prod_{P\in\P_\K}(\ord_P(D)+1).
\end{equation*}

Then for any $\varepsilon>0$, we have
\begin{align*}
 \frac{\tau(D)}{\exp (\varepsilon \, \deg D)}&=\prod_{P\in\P_\K}\frac{\ord_P(D)+1}{\exp(\varepsilon \, \ord_P(D) \deg P)}\\
 &\leq \prod_{P\in\P_\K}\frac{\ord_P(D)+1}{1+\varepsilon \, \ord_P(D) \deg P}.
\end{align*}
For places $P$ with $\deg P>1/\varepsilon$ we have
\begin{equation*}
 \frac{\ord_P(D)+1}{1+\varepsilon \, \ord_P(D) \deg P}< 1,
\end{equation*}
thus we get
\begin{equation*}
 \frac{\tau(D)}{\exp (\varepsilon \, \deg D)}\leq \prod_{\substack{P\in\P_\K\\ \deg P \leq 1/\varepsilon}}\frac{\ord_P(D)+1}{1+\varepsilon \, \ord_P(D) \deg P}.
\end{equation*}

By Lemma \ref{lemma:divisor-bound}, there is a positive constant $c=c(q,d)$ such that there are at most $\exp(c/\varepsilon)$
%are at most $\exp(O_{q,d}(1/\varepsilon))$ 
places of degree at most $1/\varepsilon$. Thus we get
\begin{equation}\label{eq:tau}
 \frac{\tau(D)}{\exp (\varepsilon \, \deg D)}= \left(\frac{1}{\varepsilon}\right)^{\exp(c/\varepsilon)}=\exp (\log (1/\varepsilon) \exp(c/\varepsilon) ).
\end{equation}
Choosing 
\begin{equation*}
 \varepsilon =\frac{2c }{\log \deg D },
\end{equation*}
we get that the logarithm of \eqref{eq:tau} is
$$
\log \frac{\tau(D)}{\exp ( \frac{2c \deg D}{\log \deg D})}\leq \log \log \left(\frac{\deg D}{2c}\right) \cdot (\deg D)^{1/2}\ll\frac{\deg D}{\log \deg D}
$$
if $ \deg D$ is large enough.

\end{proof}

%The following result is the function field analogue of \cite[Lemma~29]{BGKS12} and \cite[Lemma~2]{BKS08}.

\subsection{Product set in function fields}
We need the following result about the size of product set in function fields. 
For the field of rational numbers see \cite[Lemma~2]{BKS08}, and for number fields see \cite[Lemma~29]{BGKS12}. 
% For a real positive $R$ write
% \[
%   \E_R=\{D\in\Div{\K} : \ \deg D_0, \deg D_\infty \leq R \}.
% \]

\begin{lemma}
 Let $\K$ be an extension of $\F_q(T)$ of degree $d=[\K:\Fq(T)]$. Let $\A,\B\subset \K$ be finite sets with elements of height at most $h$. Then we have
 \begin{equation*}
  \#(\A\B)>\exp\left(-O_{q,d}\left(\frac{h}{\sqrt{\log h}}\right)\right)\#\A \#\B,
 \end{equation*}
 where the implied constant depends only on $q$ and $d$.
\end{lemma}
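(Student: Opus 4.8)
The plan is to mimic the proof for the rationals in \cite[Lemma~2]{BKS08}, using the multiplicative structure of unique factorization of divisors in a function field as a replacement for the factorization of integers. First I would reduce to the case of a single set: if $\#(\A\B)$ were small, then by the Plünnecke--Ruzsa type inequality (or just a direct pigeonhole on the number of representations) I may assume that, up to losing an inconsequential factor, there is a single finite set $\cS\subset\K^*$ of elements of height $O_d(h)$ with $\#(\cS\cS)$ close to $\#\cS$; more simply, I would just argue directly with $\A$ and $\B$. For each element $\gamma\in\A$ or $\gamma\in\B$ write its principal divisor $(\gamma)=(\gamma)_0-(\gamma)_\infty$. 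By Lemma~\ref{lemma:height-deg}, since each element has height at most $O_d(h)$, each of $(\gamma)_0$ and $(\gamma)_\infty$ is an effective divisor of degree at most $2dh = O_d(h)$.

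Next I would set up the counting. To each $\gamma$ associate the effective divisor $D(\gamma)=(\gamma)_0$, which lies in the set of effective divisors of degree $\le O_d(h)$. If $\alpha\in\A$ and $\beta\in\B$ then $(\alpha\beta)_0 \le (\alpha)_0+(\beta)_0$, so $(\alpha\beta)_0$ divides, in the divisor sense, the fixed effective divisor $E_{\alpha,\beta}:=(\alpha)_0+(\beta)_0$ of degree $O_d(h)$. The crucial point is that knowing the product $\alpha\beta$ and knowing the individual "numerator divisors" $(\alpha)_0$, $(\beta)_0$ pins down $\alpha$ and $\beta$ up to a constant from $\F_q^*$: indeed $\alpha$ is determined by its divisor $(\alpha)=(\alpha)_0-(\alpha)_\infty$ up to $\F_q^*$, and $(\alpha)_\infty$ is recovered from $(\alpha\beta)$ and the two numerator divisors once the product is fixed. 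So the number of pairs $(\alpha,\beta)\in\A\times\B$ mapping to a given product value $\pi\in\A\B$ is at most $q$ times the number of ways to write $(\alpha)_0 + (\beta)_0$ subject to $(\alpha)_0,(\beta)_0\le$ some effective divisor of bounded degree — which is controlled by the divisor function $\tau$.

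Quantitatively: fix $\pi\in\A\B$. For each representation $\pi=\alpha\beta$, the pair $\big((\alpha)_0,(\beta)_0\big)$ consists of two effective divisors whose sum $S$ satisfies $\deg S\le O_d(h)$ and is itself not entirely free — but I can bound crudely by: the number of such pairs is at most $\sum_{S}\tau(S)$ where $S$ ranges over effective divisors of degree $\le O_d(h)$ that can occur, and for each fixed $S$ there are $\tau(S)$ ways to split it. Combining Lemma~\ref{lemma:divisor-bound} (there are at most $d^{r}q^{2r}=\exp(O_{q,d}(h))$ effective divisors of degree $r=O_d(h)$) — this only gives $\exp(O_{q,d}(h))$, which is too weak — so instead I must argue more carefully, as in \cite{BKS08}: the number of representations of a \emph{fixed} $\pi$ is at most $q\cdot\tau\big((\pi)_0\cdot(\text{something of degree }O_d(h))\big)$. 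Concretely, if $\pi=\alpha\beta$ then $(\alpha)_0$ is an effective divisor with $(\alpha)_0\le (\pi)_0 + (\beta)_\infty$ and $(\beta)_\infty$ has degree $O_d(h)$; so $(\alpha)_0 \le D_\pi$ for a single effective divisor $D_\pi$ of degree $\deg(\pi)_0 + O_d(h)$, and $\alpha$ (hence $\beta$) is then determined up to $q$ choices. Thus the number of representations of $\pi$ is $\le q\,\tau(D_\pi)$, and since $\deg D_\pi \le O_d(h)$ uniformly, Lemma~\ref{lemma:tau} gives $\tau(D_\pi)\le\exp\!\big(O_{q,d}(h/\log h)\big)$. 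Summing over $\pi\in\A\B$:
\[
\#\A\,\#\B \;=\;\sum_{\pi\in\A\B}\#\{(\alpha,\beta): \alpha\beta=\pi\}\;\le\; \#(\A\B)\cdot q\cdot\exp\!\big(O_{q,d}(h/\log h)\big),
\]
which rearranges to the claimed bound (absorbing $q$ into the exponential and noting $h/\log h \le h/\sqrt{\log h}$, or more honestly getting $h/\log h$ which is stronger than stated).

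The main obstacle is precisely the step of showing the number of representations of a fixed product $\pi\in\A\B$ is bounded by (a constant times) a divisor function $\tau$ of an effective divisor of degree $O_d(h)$: one must correctly identify which effective divisor $D_\pi$ dominates all the possible $(\alpha)_0$ and verify that passing from the divisor $(\alpha)$ to the field element $\alpha$ costs only the finite factor $\#\F_q^\ast = q-1$. This uses that two elements of $\K$ with the same principal divisor differ by a constant in $\F_q^*$ (because $\K$ has constant field contained in $\F_q$ — or more precisely one works over the constant field of $\K$, contributing a factor depending only on $q$ and $d$), together with Lemma~\ref{lemma:height-deg} to keep all degrees $O_d(h)$ and Lemma~\ref{lemma:tau} to convert the divisor count into the subexponential factor. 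The remaining arithmetic — choosing the implied constants, and the elementary inequality making the final rearrangement — is routine.
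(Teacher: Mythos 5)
The step you yourself flag as the main obstacle is exactly where the argument fails, and it cannot be repaired in the direct form you propose. From $(\alpha)+(\beta)=(\pi)$ one only gets $(\alpha)_0\le(\pi)_0+(\beta)_\infty$, and $(\beta)_\infty$ changes from one representation of $\pi$ to another; the cancellation divisor $\min\{(\alpha)_0,(\beta)_\infty\}$ can have degree comparable to $dh$ and be completely different for different pairs, so there is no \emph{single} effective divisor $D_\pi$ of degree $O_d(h)$ dominating all candidates for $(\alpha)_0$. Consequently the bound ``number of representations of $\pi$ is at most $q\,\tau(D_\pi)=\exp(O_{q,d}(h/\log h))$'' is false. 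Concretely, take $\K=\F_q(T)$ and $\A=\B=\{a/b:\ a,b\in\F_q[T]\ \text{coprime},\ \deg a,\deg b\le h\}$ (all of height at most $h$): the single product $\pi=1$ has about $q^{2h}$ representations $\alpha\cdot\alpha^{-1}$, i.e.\ $\exp(\Omega(h))$ many, so bounding $\#\A\,\#\B$ by $\#(\A\B)$ times the \emph{maximal} representation number cannot give the lemma. The fact that your computation seemed to yield the stronger exponent $h/\log h$ is a symptom of this: the true exponent $h/\sqrt{\log h}$ arises precisely from the extra work needed to control cancellation. (A secondary slip: knowing $(\pi)$, $(\alpha)_0$ and $(\beta)_0$ only determines the sum $(\alpha)_\infty+(\beta)_\infty$, not $(\alpha)_\infty$ itself, so even the ``up to $\F_q^*$'' reconstruction needs all four parts, as in the paper.)

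The paper's proof (following Bourgain--Konyagin--Shparlinski) is built to get around exactly this. It passes to the divisor sets $\A'$, $\B'$ and, by an iterative extraction, finds subsets $\A_0,\B_0$ and shifts with $\A_0+B\subset\A'$, $\B_0+B'\subset\B'$, losing only a factor $\exp(O_{q,d}(h/\sqrt{\log h}))$, such that no effective divisor of degree greater than $M_1=h/\sqrt{\log h}$ lies below the zero or pole part of more than a $2/M_2$ proportion of the elements, with $M_2=\exp(\kappa h/\log h)$. For each $A\in\A_0$ one then restricts to the subset $\B_0(A)$, at least half of $\B_0$, consisting of those $B$ whose cancellation with $A$ has degree at most $M_1$; only for such pairs is the representation count of $D=A+B$ controlled, by writing $A_0=E+J$ with $J=\min\{A_0,B_\infty\}$, $\deg J\le M_1$, $E\le D_0$, which gives at most $\exp(O_{q,d}(M_1))\,\tau(D_0)$ choices for $A_0$ (and similarly for $A_\infty$, $B_0$, $B_\infty$), hence $L\le\exp(O_{q,d}(h/\sqrt{\log h}))$ by Lemmas~\ref{lemma:divisor-bound} and~\ref{lemma:tau}. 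Your outline omits this regularization step entirely, and without it the representation-count bound you rely on is simply not available; the balance between the regularization loss $\exp(O(h/\sqrt{\log h}))$ and the divisor-function bound $\exp(O(h/\log h))$ is what produces the stated exponent.
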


\begin{proof}
Put
\begin{equation*}
 \A'=\{\div{a}: \ a\in\A  \} \quad \text{and} \quad \B'=\{\div{b}: \ b\in\B\}.
\end{equation*}
Clearly 
\begin{equation}\label{eq:ini1}
\#\A'\geq \#\A /q, \quad   \#\B'\geq \#\B /q 
\end{equation}
and
\begin{equation}\label{eq:ini2}
 \#(\A\B)\geq \#(\A'+\B').
\end{equation}

In order to prove the result, it is enough to give a lower bound on $\#(\A'+\B')$ in terms of $\#\A'$ and $\#\B'$.

For positive $R$, write 
% \[
%  \L_R=\{D\in\Div{\K}: \text{$D$ has height at most $R$} \}
% \]
% and
\[
 \E_R=\{D\in\Div{\K}: \deg D_\infty+ \deg D_0 \leq R \}.
\]
By Lemma \ref{lemma:height-deg} we have $\A', \B'\subset \E_{4dh}$.

Let $\kappa=\kappa(q,d)$ be a positive number, specified later, which may depend only on $q$ and $d$. Denote
\begin{equation*}
 M_1=\frac{ h}{\sqrt{ \log h}} \quad \text{and} \quad  M_2=\exp\left(\kappa\frac{ h}{ \log h}\right).
\end{equation*}

We claim that there is a set $\A_0\subset \E_{4dh}$  of cardinality
\begin{equation}\label{eq:A0-bound}
 \#\A_0> M_2^{-4dh/M_1} \#\A'=\exp\left(-4d \kappa \frac{h}{\sqrt{\log h}}\right)\#\A'
\end{equation}
and $B\in\Div{\K}$ such that $\A_0+B\subset\A'$ and for any effective divisor $E$ of degree $\deg E>M_1$, we have
\begin{equation}\label{eq:A-cond}
 \#\{D\in\A_0: D_0 \geq E\ \text{or} \  D_{\infty}\geq E  \}<\frac{2}{M_2} \#\A_0.
\end{equation}
The construction if straightforward. If $\A_0=\A'$ does not satisfy \eqref{eq:A-cond}, there is $E_1\in\Div{\K}$, $\deg E_1>M_1$ and a subset $\A_1\subset \E_{4dh-\deg E_1} \subset\E_{4dh-M_1}$ of cardinality $\#\A_1\geq M_2^{-1}\#\A'$ and such that 
either $\A_1+E_1\subset \A'$ or $\A_1-E_1\subset \A'$.

Repeat with $\A'$ replaced by $\A_1$ until, after performing $k$ steps, we obtain a subset $\A_k\subset \E_{4dh -kM_1}$ such that $\A_k+B\subset \A'$ for some $B\in\Div{\K}$ and \eqref{eq:A-cond}. Assuming that $\A_k$ is the first set with this property, we derive that
\begin{equation*}
 \#\A_k\geq \frac{1}{M_2}\#\A_{k-1}\geq \dots \geq \frac{1}{M_2^k}\#\A'.
\end{equation*}
Since we obviously have $4dh\geq kM_1$, i.e., $k\leq 4dh/M_1$ which implies~\eqref{eq:A0-bound}.

We now use a similar argument to choose a subset $\B_0\subset\Div{\K}$ of elements of degree at most $h$ of cardinality
\begin{equation*}
  \#\B_0> M_2^{-4dh/M_1} \#\B'=\exp\left(-4d \kappa \frac{h}{\sqrt{\log h}}\right)\#\B'
\end{equation*}
and $B\in\Div{\K}$ such that $\B_0+B\subset\B'$ and for any effective divisor $E$ of degree $E>M_1$, we have
\begin{equation*}%\label{eq:A-cond}
 \#\{D\in\B_0: D_0 \geq E\ \text{or} \  D_{\infty}\geq E  \}<\frac{2}{M_2} \#\B_0.
\end{equation*}
We now establish a lower bound on $\#(\A_0+\B_0)$.

For a given divisor $D\in\E_{4dh}$, denote
\begin{equation*}
 \A_0(D)=\{A\in\A_0: \deg\left(\min\{A_0,D_{\infty}\}\right),  \deg\left(\min\{A_\infty,D_{0}\}\right)\leq M_1\}.
\end{equation*}
Clearly, \eqref{eq:A-cond} implies that, for sufficiently large $h$,
\begin{align*}
 &\#(\A_0\setminus \A_0(D))\\
 &\leq \frac{2}{M_2}\#\A_0\#\{ E\geq 0:  E\leq D_0  \text{ or }  E\leq D_\infty, \ \text{and}\ \deg E >M_1\}\\
 &<\frac{2}{M_2}\#\A_0(\tau(D_0)+\tau(D_\infty))\\
 &<\frac{4}{M_2}\#\A_0\exp\left(O_{q,d}\left( \frac{ h}{\log  h}\right) \right)
\end{align*}
by  Lemma~\ref{lemma:tau}. Thus by an appropriate choice of $\kappa$ we have
\begin{equation*}
 \#(\A_0\setminus \A_0(D))<\frac{1}{2}\#\A_0.
\end{equation*}
% by \eqref{eq:A-cond}.

Defining $\B_0(D)$ in a similar way, we conclude that
\begin{equation}\label{eq:bound_A0B0}
 \#\A_0(D)>\frac{1}{2}\A_0 \quad \text{and} \quad  \#\B_0(D)>\frac{1}{2}\B_0 
\end{equation}
for every divisor $D\in\E_{4dh}$.

We have
\begin{equation*}
 \#(\A_0+\B_0)\geq \#\left(\bigcup_{A\in\A_0} \{A+B:\ B\in\B_0(A)  \}   \right).
\end{equation*}

Using \eqref{eq:bound_A0B0} we conclude that
\begin{equation}\label{eq:A+B}
 \#(\A_0+\B_0)\geq \frac{1}{2L}\#A_0 \#\B_0,
\end{equation}
where
\begin{equation*}
 L=\max_{D\in\E_{8dh}} \#\{(A,B): A\in\A_0, B\in\B_0(A), A+B=D\}. 
\end{equation*}
It remains to bound $L$. 

Assume that $A+B=D$ for some $A\in \A_0$, $B\in\B_0(A)$ and $D\in\E_{8dh}$ and write
$$
A_0+B_0+D_\infty=A_\infty + B_\infty+D_0.
$$ 
Then we have
\begin{equation}\label{eq:A_0}
A_0\leq B_\infty +D_0.
\end{equation}
Put $J=\min\{A_0,B_\infty\}$ and write $A_0=E+J$. Then $\deg J\leq M_1$ by the definition of $\B_0(A)$ and $E\leq D_0$ by \eqref{eq:A_0}.
Therefore $A_0$ takes at most $\#\{J\in \Div{\K}: \deg J\leq M_1\}\tau(D_0)$ possible values, which is at most $\exp(O_{q,d}( M_1)) \tau(D_0)$ by Lemma~\ref{lemma:divisor-bound}. Similarly, $B_0$ takes at most $\exp(O_{q,d}( M_1)) \tau(D_0)$ possible values while $A_\infty$ and $B_\infty$ take at most $\exp(O_{q,d}( M_1)) \tau(D_\infty)$ possible values.

Therefore by Lemma \ref{lemma:tau} we have
\begin{align}\label{eq:L}
 L&\leq \exp(O_{q,d}( M_1)) \tau(D_0)^2 \tau(D_\infty)^2
% &\leq \exp\left(4c_2(q,d) M_1+  4c_1(q,d) \frac{h}{\log h}\right)
 \leq \exp\left(O_{q,d}\left(  \frac{h}{\sqrt{\log h}}\right)  \right),
 \end{align}
%for some $c_3(q,d)$, 
provided that $h$ is large enough. Substituting \eqref{eq:L}  in \eqref{eq:A+B} we get the result by \eqref{eq:ini1} and \eqref{eq:ini2}.
\end{proof}

For a set $\A\subset \K$ and positive integer $\nu\in\mathbb{N}$, let $\A^{(\nu)}$ denote the \emph{$\nu$-fold product set}, that is
\begin{equation}\label{eq:nu}
 \A^{(\nu)}=\{a_1\dots a_\nu: \ a_1,\dots, a_\nu\in\A\}.
\end{equation}

\begin{cor}\label{cor:product_set}
Let $\K$ be an extension of $\F_q(T)$ of degree $d=[\K:\Fq(T)]$. Let $\A\subset  \K$ be a finite set of elements of height at most $h$ and let $\nu\in\mathbb{N}$ be a given positive integer.  
Then we have
 \begin{equation*}
  \#\A^{(\nu)}>\exp\left(-O_{q,d,\nu}\left(\frac{h}{\sqrt{\log h}}\right)\right)(\#\A)^{\nu},
 \end{equation*}
 where the implied constant depends only on $q$, $d$ and $\nu$.
\end{cor}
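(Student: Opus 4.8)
The plan is to induct on $\nu$, using the preceding lemma on the size of product sets as the engine and the elementary height estimates recorded at the start of Section~2 to keep track of how heights grow under iterated multiplication. The base case $\nu=1$ is trivial, and $\nu=2$ is precisely that lemma with $\B=\A$. So assume the statement holds for $\nu-1$ in place of $\nu$.

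For the inductive step I would first observe that, since every element of $\A$ has height at most $h$, iterating the remark that a product of two elements of height at most $H$ has height $O_d(H)$ shows that every element of $\A^{(\nu-1)}$ has height at most $h':=C_{d,\nu}\,h$ for a suitable constant $C_{d,\nu}\ge 1$ depending only on $d$ and $\nu$. Writing $\A^{(\nu)}=\A^{(\nu-1)}\cdot\A$ and noting that both factors consist of elements of height at most $h'$, the product‑set lemma applied to the pair $\A^{(\nu-1)}$, $\A$ with common height bound $h'$ gives
\[
\#\A^{(\nu)}=\#\bigl(\A^{(\nu-1)}\cdot\A\bigr)>\exp\!\left(-O_{q,d}\!\left(\frac{h'}{\sqrt{\log h'}}\right)\right)\#\A^{(\nu-1)}\cdot\#\A .
\]
Then I would absorb the constant $C_{d,\nu}$: for $h>1$ we have $\log h'=\log C_{d,\nu}+\log h\ge\log h$, hence $h'/\sqrt{\log h'}\le C_{d,\nu}\,h/\sqrt{\log h}$, so the exponent above is $O_{q,d,\nu}(h/\sqrt{\log h})$; for $h$ bounded, $\#\A$ is bounded in terms of $q,d,h$ (the minimal polynomial has degree at most $d$ and coefficients of degree at most $h$), so the claimed inequality holds after enlarging the implied constant. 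Finally, applying the induction hypothesis to bound $\#\A^{(\nu-1)}$ from below by $\exp(-O_{q,d,\nu}(h/\sqrt{\log h}))(\#\A)^{\nu-1}$ and adding the two exponents yields $\#\A^{(\nu)}>\exp(-O_{q,d,\nu}(h/\sqrt{\log h}))(\#\A)^{\nu}$, which closes the induction.

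The only point that genuinely needs care is the height bookkeeping: one must make sure that the height bound $h'$ for $\A^{(\nu-1)}$ is still linear in $h$ with a constant depending only on $d$ and $\nu$, so that replacing $h$ by $h'$ changes the quantity $h/\sqrt{\log h}$ by at most a bounded factor. Everything else is a direct appeal to the product‑set lemma plus the trivial (but necessary) treatment of small $h$, so I expect the argument to be short once the height estimate is stated correctly.
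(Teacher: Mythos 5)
Your argument is correct and is exactly the intended deduction: the paper states the corollary without proof as an immediate consequence of the two-set product lemma, and the natural proof is precisely your induction on $\nu$, applying that lemma to $\A^{(\nu-1)}\cdot\A$ together with the height bound $O_{d,\nu}(h)$ for elements of $\A^{(\nu-1)}$ from the remarks at the start of Section~2. The height bookkeeping and the separate treatment of bounded $h$ are handled appropriately, so nothing is missing.
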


% Let $\Fq$ be a finite field of $q$ elements and let $\F_{q^n}=\Fq(\alpha)$ be an extension of $\Fq$ of degree $n$ obtained by adjoining a root $\alpha$ of an irreducible polynomial $\psi$ of degree $n$ over $\Fq$.
% 
% Let
% \begin{equation*}
%  \V_m=\{a_0+a_1\alpha+\dots+a_{m-1}\alpha^{m-1}:a_0,a_1,\dots, a_{m-1}\in\Fq \}, \quad 1\leq m\leq n.
% \end{equation*}

\section{Main results}

\subsection{Rational function values in subgroups}\label{sec:product_set}
We note that in order to give a lower bound on $E_r(\V_m)$ it is enough to give a lower bound on the cardinality of $r(\V_m)^{(\nu)}$ for any integer $\nu\geq 1$.

\begin{theorem}\label{lemma:main}
 There is an absolute constant $c>0$ such that if for some fixed integer $\nu\geq 1$, sufficiently large positive integer $m$ and $n$ with
 \begin{equation*}
m\leq \left(\frac{c}{\nu}\right)^{2d+1}n,
 \end{equation*}
then the following holds:
 For any two distinct monic polynomials $f,g\in\Fq[X]$ of degree $d$ the $\nu$-fold product of the set
 \begin{equation*}
  \A=\left\{\frac{f(x)}{g(x)}: \ x\in\V_m  \right\}\subset \F_{q^n}
 \end{equation*}
we have
\begin{equation*}
 \#\A^{(\nu)}>\exp\left(-O_{q,d,\nu}\left(\frac{m}{\sqrt{\log m}}\right)\right) q^{\nu m}.
\end{equation*}
where the implied constant depends only on $q$, $d$ and $\nu$ and $\A^{(\nu)}$ is defined by \eqref{eq:nu}.
\end{theorem}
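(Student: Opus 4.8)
First I would observe that the issue is genuinely the \emph{collision structure} of the map $x \mapsto f(x)/g(x)$ on $\V_m$: if this map were injective one would immediately have $\#\A = q^m$, and the $\nu$-fold product bound would follow from Corollary~\ref{cor:product_set} after transporting the problem to a function field. The difficulty is that $f/g$ need not be injective on $\V_m$, so I need two things: (i) a lower bound on $\#\A$ itself, and (ii) a way to view $\A$ as a set of elements of bounded height in some extension $\K/\F_q(T)$ so that Corollary~\ref{cor:product_set} applies to $\A^{(\nu)}$ rather than just to an abstract product set.

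For step (ii), the natural move is to take $T = \alpha$ (or rather to set up $\K = \F_q(T)$ with $T$ a transcendental and realize $\V_m$ inside a function field picture): an element $x = a_0 + a_1 T + \dots + a_{m-1}T^{m-1}$ with $a_i \in \F_q$ has height $O(m)$, and since $f, g$ have \emph{constant} coefficients and degree $d$, the element $f(x)/g(x)$ has height $O_d(m)$ by the remarks on heights in Section~2. So $\A$ sits inside a field of transcendence degree one over $\F_q$ with all elements of height $O_d(m)$, and Corollary~\ref{cor:product_set} gives
\begin{equation*}
 \#\A^{(\nu)} > \exp\left(-O_{q,d,\nu}\left(\frac{m}{\sqrt{\log m}}\right)\right)(\#\A)^{\nu}.
\end{equation*}
Thus everything reduces to showing $\#\A \ge q^m \exp(-O(m/\sqrt{\log m}))$, or even just $\#\A \gg q^{m(1-o(1))}$, since the loss is absorbed into the error term. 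Actually one wants $\#\A > c_1 q^m$ for the stated conclusion with the clean exponent $q^{\nu m}$; more carefully, $(\#\A)^\nu \ge q^{\nu m}\exp(-O(\nu m /\sqrt{\log m}))$ suffices, so it is enough to prove $\#\A \ge q^m \exp(-O(m/\sqrt{\log m}))$.

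For step (i), the point is to bound the number of collisions. A collision $f(x)/g(x) = f(y)/g(y)$ with $x,y \in \V_m$ means $f(x)g(y) = f(y)g(x)$, i.e. $(x,y)$ lies on the affine curve $\{F(X,Y) := f(X)g(Y) - f(Y)g(X) = 0\}$. Since $f \ne g$ are distinct monic of degree $d$, $F$ is a nonzero polynomial of degree $\le 2d$ vanishing on the diagonal; writing $F(X,Y) = (X-Y)H(X,Y)$, the bad set is the diagonal together with $\{H = 0\}$. I would then count $\F_q$-points of $\{H=0\}$ restricted to $\V_m \times \V_m$: parametrizing $\V_m$ by $\F_q^m$ via the basis $1,\alpha,\dots,\alpha^{m-1}$, the condition $H(x,y)=0$ (with $x,y$ now $\F_q$-linear combinations, and using $\F_{q^n} = \F_q(\alpha)$ with $n$ large) becomes a system of polynomial equations over $\F_q$ in the $2m$ coordinates; because $m$ is small relative to $n$ (here the hypothesis $m \le (c/\nu)^{2d+1} n$ enters, guaranteeing no "wraparound" in the $\alpha$-expansion), the fibers of $f/g$ over each value have size bounded by a quantity like $q^{O(d)}$ or by a power of $q$ with exponent $o(m)$. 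This is exactly where I expect to invoke Lemma~\ref{lemma:Chang} and Lemma~\ref{lemma:Hilbert}: one wants to say that the multiplicity/collision locus, being cut out by low-degree equations with coefficients of controlled height, cannot contain too many points of $\V_m \times \V_m$ — and the effective Nullstellensatz controls the height of the certificate polynomial $A$, whose degree then bounds how far into the $\alpha$-expansion the constraints propagate, hence bounds the collision count. Concretely, I anticipate showing each fiber has size at most $q^{\theta m}$ for some $\theta = \theta(d,\nu) < 1$ (or $\theta \to 0$), using that $m/n$ is smaller than a constant depending on $d$ through the exponent $2d+1$; then $\#\A \ge q^m / q^{\theta m} = q^{(1-\theta)m}$, which is enough (after possibly iterating the product-set bound to recover the full exponent, or by choosing $c$ small enough that $\theta$ is negligible).

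\textbf{Main obstacle.} The genuinely hard step is (i): controlling the collision fibers of $f(x)/g(x)$ on $\V_m$ uniformly in $m$ and $n$, with the precise dependence $m \le (c/\nu)^{2d+1}n$. The naive curve-point count gives $O_d(q^{m-1} \cdot \text{stuff})$ only if one already knows the relevant variety is not "filled up" by $\V_m \times \V_m$, and proving \emph{that} — that a proper subvariety defined over $\F_q$ by equations of degree $O_d(1)$ meets the Frobenius-invariant box $\V_m \times \V_m$ in a provably small set — is precisely what forces the use of the effective Hilbert Nullstellensatz in function fields (Lemma~\ref{lemma:Hilbert}) together with Lemma~\ref{lemma:Chang} to produce a witness of bounded height, whose height bound translates (via Lemma~\ref{lemma:height-deg}, Lemma~\ref{lemma:divisor-bound}) into the degree-$n$ threshold. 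Getting the exponent of $c/\nu$ to come out as $2d+1$ — rather than something worse — will require tracking the degree and height estimates in Lemma~\ref{lemma:Hilbert} carefully, since that $r^{\min\{N,n+1\}}$ factor with $r \asymp d$ is exactly what produces a power-of-$d$ dependence. Once the fiber bound is in hand, the passage to the function field and the application of Corollary~\ref{cor:product_set} is routine.
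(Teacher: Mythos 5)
There is a genuine gap, and it sits at the heart of your step (ii). You treat $\A$ as if it were already a set of low-height elements of a function field and apply Corollary~\ref{cor:product_set} to it directly; but $\A\subset\F_{q^n}$, and the corollary speaks about subsets of an extension of $\Fq(T)$. To use it you must work with lifts and then argue that distinct $\nu$-fold products of lifts remain distinct after reduction modulo $\psi(T)$ --- an argument you never make, and which is in fact where the hypothesis $m\ll n$ would enter in your scheme. More seriously, your height claim that $f(x)/g(x)$ has height $O_d(m)$ rests on reading ``$f,g\in\Fq[X]$'' literally as constant coefficients. In the theorem as it is actually proved and used (the coefficients $a_i,b_i$ are identified with polynomials $a_i(T),b_i(T)$ of degree up to $n-1$, and the application in Section~\ref{sec:indentity} takes $f,g\in\F_{q^n}[X]$), the natural lift of $f(x)/g(x)$ has height of order $n$, not $O_d(m)$, and Corollary~\ref{cor:product_set} then only gives a loss $\exp(-O(n/\sqrt{\log n}))$, which obliterates $q^{\nu m}$ when $m=o(n)$. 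The paper's key idea, absent from your proposal, is precisely to avoid lifting $f,g$: it encodes all multiplicative collisions $\prod_i f(x_i)g(y_i)=\prod_i f(y_i)g(x_i)$ on $\V_m^{\nu}$ into the family $P_{\x,\y}(\mathbf{U},\mathbf{V})$, shows via Lemma~\ref{lemma:Hilbert} that if the associated variety lay in the diagonal then the certificate $A_k$ would be divisible by $\psi(T)$, forcing $\deg_T A_k\ge n$ and contradicting $m\le (c/\nu)^{2d+1}n$, and then uses Lemma~\ref{lemma:Chang} to extract an off-diagonal surrogate pair $(\a,\b)$ of height $O_{q,d,\nu}(m)$ in a bounded-degree extension $\K$. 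Every collision of $\Phi(\x)=\prod_j f(x_j)/g(x_j)$ is then a collision of $\Psi(\x)=\prod_j F_{\a}(x_j(T))/G_{\b}(x_j(T))$, so $\#\A^{(\nu)}\ge\#\mathcal{C}^{(\nu)}$ with $\mathcal{C}$ of height $O(m)$ and $\#\mathcal{C}\ge q^m/d$, and only at that point does Corollary~\ref{cor:product_set} apply. Your proposal mentions Lemmas~\ref{lemma:Hilbert} and~\ref{lemma:Chang} but assigns them a different, undeveloped role.

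Your step (i) also aims at the wrong quantitative target. A fiber bound $q^{\theta m}$ with a constant $\theta>0$ only gives $\#\A\ge q^{(1-\theta)m}$, and then $(\#\A)^{\nu}$ misses $q^{\nu m}$ by the factor $q^{\nu\theta m}$, a positive power of $q^{m}$ that cannot be absorbed into $\exp(-O(m/\sqrt{\log m}))$; ``iterating the product-set bound'' does not recover it. Note that the case $\nu=1$ of the theorem \emph{is} the statement $\#\A\ge q^{m}\exp(-O(m/\sqrt{\log m}))$, so your step (i) is essentially the full theorem and cannot be dispatched by an unexamined point count on the curve $H=0$ inside $\V_m\times\V_m$; the paper never bounds fibers in $\F_{q^n}$ directly, but only after transport to $\K$, where $F_{\a}/G_{\b}$ is a nonconstant rational function of degree at most $d$ over an infinite field. (If one truly restricted to $f,g\in\Fq[X]$ as the statement is printed, the result would indeed be much easier --- lift, observe fibers of size at most $d$ over $\Fq(T)$, and check that reduction modulo $\psi$ is injective on $\nu$-fold products of lifts because their numerators and denominators have degree $O(d\nu m)<n$ --- but that Nullstellensatz-free argument is not what you wrote, and it does not extend to the setting in which the theorem is applied.)
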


% The lower bound is optimal in the following sense.
As $\#\V_m=q^m$, we immediately have the upper bound $\#\A^{(\nu)}\leq q^{\nu m}$. By Theorem~\ref{lemma:main} we also have $\#\A^{(\nu)}\geq q^{vm(1+o(1))}$ as $m\rightarrow\infty$.

\begin{proof} We closely follow the proof of \cite[Lemma~35]{BGKS12}. Let $\kappa$ be a positive absolute constant fixed later and put
\begin{equation*}
 c=\inf_{d\geq 1}\left(\frac{1}{\kappa d^2 2^{2d+2}+1}\right)^{1/(2d+1)}.
\end{equation*}

Let
\begin{equation*}
 f(X)=X^d+\sum_{k=0}^{d-1}a_{d-k}X^k \quad \text{and} \quad g(X)=X^d+\sum_{\ell=0}^{d-1}b_{d-\ell}X^\ell.
\end{equation*}
We move the problem from the finite field  to the function field where we are in the position to apply Lemma \ref{lemma:Hilbert}.
 
Since $\F_{q^n}\cong \Fq[T]/\psi(T)$ for the irreducible polynomial $\psi(T)\in\Fq[T]$ of degree $n$ such that $\psi(\alpha)=0$, we can identify any element $u\in\F_{q^n}$ with the corresponding polynomial $u(T)\in\Fq[T]$ of degree $\deg_T u\leq n-1$. 
 
We consider the collection $\mathcal{P}\subset\Fq[T][\mathbf{U},\mathbf{V}]$, where
\begin{equation*}
 \mathbf{U}=(U_1\dots,U_d) \quad \text{and} \quad  \mathbf{V}=(V_1\dots,V_d)
\end{equation*}
of polynomials
\begin{align*}
 P_{\x,\y}(\mathbf{U},\mathbf{V})=&\prod_{i=1}^{\nu}\left(x_i^d+\sum_{k=0}^{d-1}U_{d-k}x_i^k  \right)\left(y_i^d+\sum_{\ell=0}^{d-1}V_{d-\ell}y_i^\ell  \right)\\
 &\quad - \prod_{i=1}^{\nu}\left(x_i^d+\sum_{k=0}^{d-1}V_{d-k}x_i^k  \right)\left(y_i^d+\sum_{\ell=0}^{d-1}U_{d-\ell}y_i^\ell  \right),
\end{align*}
where $\x=(x_1(T),\dots, x_\nu(T))$ and $\y=(y_1(T),\dots, y_\nu(T))$ with polynomial entries  $x_i(T),y_i(T)\in\Fq[T]$ ($1\leq i\leq\nu$) of degree at most $m-1$ such that
\begin{equation*}
  P_{\x,\y}(a_1(T),\dots, a_d(T),b_1(T),\dots, b_d(T))\equiv 0 \mod \psi(T).
\end{equation*}
This is equivalent to
\begin{multline*}
   P_{\x,\y}(a_1(\alpha),\dots, a_d(\alpha),b_1(\alpha),\dots, b_d(\alpha))\\
   = \prod_{i=1}^\nu f(x_i(\alpha))g(y_i(\alpha))-  \prod_{i=1}^\nu f(y_i(\alpha))g(x_i(\alpha)).
\end{multline*}

Clearly, if $P_{\x,\y}(\mathbf{U},\mathbf{V})$ is identical to zero modulo $\psi(T)$, then, by the uniqueness of polynomial factorization in the ring $\F_{q^n}[\mathbf{U},\mathbf{V}]$, we see that for every $i=1,\dots, \nu$, for every linear form
\begin{equation*}
 L_{x_i}(\mathbf{U})=x_i(\alpha)^d+U_{d-1}x_i(\alpha)^{d-1}+\dots+U_1x_i(\alpha)+U_0
\end{equation*}
there should be an equal (over $\F_{q^n}$) form
\begin{equation*}
  L_{y_j}(\mathbf{U})=y_j(\alpha)^d+U_{d-1}y_j(\alpha)^{d-1}+\dots+U_1y_j(\alpha)+U_0
\end{equation*}
with some $j=1,\dots, \nu$. Hence, if $P_{\x,\y}(\mathbf{U},\mathbf{V})$ vanishes, then $\x$ and $\y$ can be obtained from each other by a permutation of their components. 

Therefore, if $\mathcal{P}$ contains only the zero polynomial, then each $\lambda\in\F_{q^n}$, given by the product
\begin{equation*}
 \lambda= \prod_{i=1}^\nu \frac{\displaystyle x_i(\alpha)^d+\sum_{k=0}^{d-1}a_{d-k}x_i(\alpha)^k}{\displaystyle x_i(\alpha)^d+\sum_{k=0}^{d-1}b_{d-k}x_i(\alpha)^k}  
\end{equation*}
appears no more that $\nu!$ times. In turn this implies that
\begin{equation*}
 \#\A^{(\nu)}\geq \frac{1}{\nu!}(\#\A)^{\nu}\gg q^{\nu m}.
\end{equation*}

Thus we can assume that $\mathcal{P}$ contains non-zero polynomials.
 
Clearly, each polynomial $P(\mathbf{U},\mathbf{V})\in\mathcal{P}$ is of total degree $\nu$ in $\mathbf{U}$ and $\mathbf{V}$ and it is of degree at most $2d\nu m$ in $T$. 

We take a family $\mathcal{P}_0$ containing the largest possible number
\begin{equation*}
 N\leq q^{2\nu m}-1
\end{equation*}
of linearly independent polynomials $P_1,\dots,P_N\in\mathcal{P}$ over $\F_q(T)$ and consider the variety
\begin{equation*}
 \V: \ \{(\mathbf{U},\mathbf{V})\in\overline{\Fq(T)}^{2d}: \ P_1(\mathbf{U},\mathbf{V})=\dots=P_N(\mathbf{U},\mathbf{V})=0 \}.
\end{equation*}
Clearly, $\V\neq\emptyset$ as it contains the diagonal $\mathbf{U}=\mathbf{V}$.

We claim that $\V$ contains a point outside the diagonal, that is, there is a point $(\a,\b)$ with $\a,\b\in\overline{\Fq(T)}^{d}$ and $\a\neq\b$.

Assume that $\V$ does not contain a point outside of the diagonal. Then for every $k=1,\dots, d$, the polynomial
\begin{equation*}
 R_k(U_1,\dots, U_d,V_1,\dots, V_d)=U_k-V_k
\end{equation*}
vanishes on $\V$.

Then by Lemma~\ref{lemma:Hilbert}, there are polynomials $Q_{k,1},\dots, Q_{k,N}\in\Fq[T][\mathbf{U},\mathbf{V}]$, $A_k\in\Fq[T]$ and positive integer $\mu_k$ with
\begin{equation}\label{eq:A-bound}
 \deg_T A_k\leq c_0 \nu d^2 (2\nu)^{2d} m
\end{equation}
for some absolute constant $c_0$ and such that
\begin{equation}\label{eq:hilbert-eq}
 P_1Q_{k,1}+\dots+P_NQ_{k,N}=A_k(U_k-V_k)^{\mu_k}.
\end{equation}
Since $f\neq g$, there is a $k\in\{1,\dots, d\}$ for which $a_k(T)\not\equiv b_k(T)\mod \psi(T)$. For this $k$ we substitute
\begin{equation*}
 (\mathbf{U},\mathbf{V})=(a_1(T),\dots, a_d(T),b_1(T),\dots, b_d(T))
\end{equation*}
in \eqref{eq:hilbert-eq}. Recalling the definition of the set $\mathcal{P}$ we now derive that $\psi(T)\mid A_k(T)$ and thus $\deg_T A_k\geq n$. Then
choosing $\kappa=c_0$, \eqref{eq:A-bound} violates the first condition of the theorem.
Hence the set
\begin{equation*}
 \mathcal{U}=\V\cap [\mathbf{U}-\mathbf{V}\neq 0]
\end{equation*}
is nonempty. Applying Lemma~\ref{lemma:Chang} we see that it has a point $(\a,\b)$ with components of height at most $C(q,d,\nu) m$  in an extension $\K$ of $\Fq(T)$ of degree $[\K:\Fq(T)]\leq \Delta(q,d,\nu)$, where $C(q,d,\nu)$ and $\Delta(q,d,\nu)$ depend only on $q$, $d$ and $\nu$.

Consider the maps $\Phi:\V_m^\nu\rightarrow \F_{q^n}$ given by
\begin{equation*}
 \Phi: \x=(x_1,\dots,x_\nu)\mapsto \prod_{j=1}^{\nu}\frac{f(x_j)}{g(x_j)}
\end{equation*}
and $\Psi:\V_m^\nu\rightarrow \K$ given by
\begin{equation*}
 \Psi: \x=(x_1,\dots,x_\nu)\mapsto \prod_{j=1}^{\nu}\frac{F_{\a}(x_j(T))}{G_{\b}(x_j(T))},
\end{equation*}
where
\begin{equation*}
 F_{\a}(X)=X^d+\sum_{k=0}^{d-1}\alpha_{d-k}(T)X^k \quad \text{and} \quad  G_{\b}(X)=X^d+\sum_{\ell=0}^{d-1}\beta_{d-\ell}(T)X^\ell.
\end{equation*}
By construction of $(\a,\b)$ we have that $\Psi(\x)=\Psi(\y)$ if $\Phi(\x)=\Phi(\y)$. Hence
\begin{equation*}
 \#\A^{(\nu)}\geq \Im \Psi=\#\mathcal{C}^{(\nu)},
\end{equation*}
where $\Im \Psi$ is the image set of the map $\Psi$ and
\begin{equation*}
 \mathcal{C}=\left\{ \frac{F_{\a}(x)}{G_{\b}(x)}: \ x\in\Fq[T], \deg x< m \right\}\subset \K.
\end{equation*}
Using Corollary~\ref{cor:product_set}, we derive the result.
\end{proof}

\subsection{The identity testing problem}\label{sec:indentity}
Here we give an application of Theorem~\ref{lemma:main} to the identity testing problem.

\begin{theorem}\label{thm:1}
Let $q$ be a fixed prime power and let $e$ and $n$ be positive integers with $e\mid q^n-1$ and $e\leq q^{\delta n}$ for some fixed $\delta$. Given two oracles $\mathfrak{D}_{e,f}$ and $\mathfrak{D}_{e,g}$ for some unknown monic polynomials $f,g\in\F_{q^n}[X]$ of degree $d$ with $f\not\sim_e g$, there is a deterministic algorithm to decide whether $f=g$ in at most $e^{O_d\left(\delta^{1/(2d)}\right)}$ queries to the oracles   $\mathfrak{D}_{e,f}$ and $\mathfrak{D}_{e,g}$. Here the implied constant might depend on $d$.
\end{theorem}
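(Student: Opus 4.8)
The plan is to reduce the identity testing problem to the lower bound of Theorem~\ref{lemma:main} via a counting argument analogous to those in~\cite{BGKS12,S14,IKSSS}. Suppose $f \neq g$ are monic of degree $d$ with $f \not\sim_e g$. The key observation is that the oracle $\mathfrak{D}_{e,f}(x) = f(x)^e$ depends only on $f(x)$ modulo the subgroup $\cG$ of $\F_{q^n}^*$ of index $e$ (equivalently, of size $(q^n-1)/e$), since $f(x)^e$ determines and is determined by the coset $f(x)\cG$ when $f(x) \neq 0$. Thus if we query on all inputs in a set $\cS \subset \F_{q^n}$ and find agreement everywhere, then for every $x \in \cS$ with $g(x) \neq 0$ we have $f(x)/g(x) \in \cG$, so the value set $r(\cS) \subset \cG$ where $r = f/g$. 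The algorithm is therefore: pick $\cS = \V_m$ for a suitable $m$, query both oracles on all $q^m$ points of $\V_m$, and declare $f = g$ if and only if the outputs agree everywhere.

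The correctness argument is a proof by contradiction. If $f \neq g$ but the oracles agree on all of $\V_m$, then $r(\V_m) \subset \cG$, hence also $r(\V_m)^{(\nu)} \subset \cG$ for every $\nu \geq 1$ (since $\cG$ is a group), which gives $\#\A^{(\nu)} \leq \#\cG = (q^n-1)/e \leq q^n$. On the other hand, Theorem~\ref{lemma:main} (applied with this $\nu$) gives $\#\A^{(\nu)} \geq q^{\nu m(1 + o(1))}$ as $m \to \infty$, provided $m \leq (c/\nu)^{2d+1} n$. Choosing $m$ close to $(c/\nu)^{2d+1} n$, the lower bound reads roughly $q^{\nu (c/\nu)^{2d+1} n (1+o(1))}$, and we need this to exceed $q^n$, i.e.\ we need $\nu (c/\nu)^{2d+1} > 1$, i.e.\ $c^{2d+1} > \nu^{2d}$. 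This fails for large $\nu$, so the choice of $\nu$ must be calibrated: $\nu$ should be taken as a suitable fixed constant (depending on $d$) — the largest integer with $\nu^{2d} < c^{2d+1}$ does not work since $c < 1$, so instead one must exploit the full strength of the constraint and the freedom in $m$. The honest accounting is: given $e \leq q^{\delta n}$, set $m = \lfloor (c/\nu)^{2d+1} n \rfloor$ and $\nu = \lceil \text{const}\cdot \delta^{-1/(2d)} \rceil$, chosen so that $\nu m / n$ (which is $\approx \nu^{-2d} c^{2d+1}$) exceeds $\delta$ by a fixed factor; then $q^{\nu m} > q^{\delta n} \cdot q^{\Omega(n)} \geq \#\cG \cdot q^{\Omega(n)}$, beating the upper bound for large $n$, a contradiction. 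This forces $f = g$.

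For the complexity count: the algorithm makes $\#\V_m = q^m$ oracle queries. With $m \approx (c/\nu)^{2d+1} n$ and $\nu \approx \delta^{-1/(2d)}$, we have $q^m = q^{(c/\nu)^{2d+1} n} = q^{c^{2d+1} \delta \cdot n / (\text{const})} = \left(q^{\delta n}\right)^{O_d(1) \cdot c^{2d+1}/\text{const}}$. Since $e$ can be as large as $q^{\delta n}$ but we want the bound in terms of $e$, and in the worst case $q^{\delta n} \approx e$ (when $\delta$ is essentially $\log_{q^n} e$), we get $q^m = e^{O_d(c^{2d+1} \delta^{?})}$; tracking the exponent of $\delta$ through $\nu \approx \delta^{-1/(2d)}$ and $m \approx \nu^{-(2d+1)} n$ gives $m/n \approx \delta^{(2d+1)/(2d)}$ up to constants, and dividing by $\delta$ (to pass from $q^{n}$-exponent to $e$-exponent via $e = q^{\delta n}$) yields exponent $\delta^{1/(2d)}$, matching the claimed $e^{O_d(\delta^{1/(2d)})}$. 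Each query costs $(\log q^n)^{O(1)}$ and the final comparison is trivial, so these are absorbed.

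The main obstacle is the delicate optimization in the second paragraph: one must choose $\nu$ and $m$ simultaneously so that the lower bound $q^{\nu m}$ of Theorem~\ref{lemma:main} strictly beats the trivial upper bound $\#\cG \leq q^{\delta n}$ while the constraint $m \leq (c/\nu)^{2d+1} n$ is respected and the resulting query count $q^m$ comes out as $e^{O_d(\delta^{1/(2d)})}$ rather than something worse. This is purely an exercise in balancing exponents — getting the power of $\delta$ exactly right in $\nu \approx \delta^{-1/(2d)}$ is what produces the final $\delta^{1/(2d)}$ in the exponent — but it must be done carefully, keeping track of where the "sufficiently large $n$" hypothesis (implicit in the $o(1)$ of Theorem~\ref{lemma:main} and in "sufficiently large $m$") is invoked. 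A secondary, routine point is verifying that "agreement of $\mathfrak{D}_{e,f}$ and $\mathfrak{D}_{e,g}$ on all of $\V_m$" genuinely forces $f(x)/g(x) \in \cG$ even at points where $g(x) = 0$ or $f(x) = 0$: at such points one argues directly that $f$ and $g$ must share the root (else $f(x)^e \neq g(x)^e$ trivially), which is harmless since there are at most $d$ such points and they do not affect the asymptotics.
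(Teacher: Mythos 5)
Your overall strategy is the paper's (query both oracles on $\V_m$, put the ratios $f(x)/g(x)$ in a multiplicative subgroup, and contradict Theorem~\ref{lemma:main} by balancing $\nu\approx\delta^{-1/(2d)}$ against the constraint $m\le (c/\nu)^{2d+1}n$), but there is a genuine error in the key reduction step: you identify the wrong subgroup. Agreement of the oracles at $x$ means $f(x)^e=g(x)^e$, i.e.\ $(f(x)/g(x))^e=1$, so $f(x)/g(x)$ lies in the group of $e$-th roots of unity in $\F_{q^n}^*$, which has \emph{order} $e$ (it is the set of $(q^n-1)/e$-th powers), not in the index-$e$ subgroup of $e$-th powers of size $(q^n-1)/e$ as you state; $f(x)^e$ determines the coset of $f(x)$ modulo the \emph{kernel} of $z\mapsto z^e$, and that kernel has order $e$. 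With your identification the argument cannot close: the upper bound becomes $\#\A^{(\nu)}\le (q^n-1)/e\ge q^{(1-\delta)n}$, while the lower bound available from Theorem~\ref{lemma:main} is at most $q^{\nu m}\le q^{c^{2d+1}\nu^{-2d}n}$, which is a small multiple of $\delta n$ in the exponent and cannot beat $q^{(1-\delta)n}$ for small $\delta$. Indeed your own later inequality ``$q^{\delta n}\ge \#\cG$'' is false for the group you defined and is exactly the statement $\#\cG=e\le q^{\delta n}$ for the correct group; so the contradiction you derive silently switches to the correct identification, making the write-up internally inconsistent. The paper works with the subgroup $\mathcal{G}_e$ of order $e$, bounds $\#\A^{(\nu)}\le e$, and contradicts this with $\#\A^{(\nu)}\ge q^{m(\nu+o(1))}$ and $\nu m>(2+o(1))\log e/\log q$.

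A secondary, fixable slip is in the complexity accounting: taking $m=\lfloor (c/\nu)^{2d+1}n\rfloor$ maximal gives $q^m\approx q^{c^{2d+1}\delta^{(2d+1)/(2d)}n}$, which is $e^{O_d(\delta^{1/(2d)})}$ only when $e$ is genuinely close to $q^{\delta n}$; since only $e\le q^{\delta n}$ is assumed, your parenthetical ``in the worst case $q^{\delta n}\approx e$'' is not a proof. Either rerun the argument with $\delta$ replaced by $\log_q e/n$, or, as the paper does, tie $m$ directly to $e$ by taking $m=\lfloor 2\log e/(\nu\log q)\rfloor$, which simultaneously guarantees $\nu m>(2+o(1))\log e/\log q$ for the contradiction and $q^m\le e^{O_d(\delta^{1/(2d)})}$ for the query count. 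Your treatment of the points where $f$ or $g$ vanishes, and the calibration $\nu\approx\delta^{-1/(2d)}$, are fine and match the paper.
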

% 
% The underlying algorithm is quite simple. It calls the oracles $\mathfrak{D}_{e,f}$ and $\mathfrak{D}_{e,g}$ on a given low-dimensional affine space and compare the outputs. The theorem says that for a fixed degree $d$ and $e\rightarrow \infty$ if $e=q^{o(n)}$, then one can test whether $f=g$ in time $e^{o(1)}(\log q^n)^{O(1)}$ in $e^{o(1)}$ oracle calls.

\begin{proof}
We set 
\begin{equation*}
 \nu=\left \lfloor \frac{c^{1+1/(2d)}}{(2\delta)^{1/(2d)}} \right \rfloor \quad \text{and} \quad m= \left \lfloor \frac{2 \log e}{\nu \log q} \right \rfloor 
\end{equation*}
where $c$ is the constant of Theorem~\ref{lemma:main}. We note that
\begin{equation*}
 \frac{2\delta}{\nu}\leq \left(\frac{c}{\nu}\right)^{2d+1}
\end{equation*}
so we have
\begin{equation}\label{eq:m}
 m\leq  \frac{2 \log e}{\nu \log q} \leq \frac{2 \delta n}{\nu }\leq \left(\frac{c}{\nu}\right)^{2d+1}n. 
\end{equation}

We now query the oracles $\mathfrak{D}_{e,f}$ and $\mathfrak{D}_{e,g}$ for all $x\in\V_m$.

If the oracles return two distinct values, then clearly $f\neq g$. Now assume
\begin{equation*}
 f(x)^e=g(x)^e,\quad x\in\V_m.
\end{equation*}
Therefore, the values $f(x)/g(x)$, $x\in\V_m$ belong to the subgroup $\mathcal{G}_e$ of $\F_{q^n}^*$ of order $e$. Hence for the set
\begin{equation*}
 \A=\left\{\frac{f(x)}{g(x)}:\ x\in\V_m \right\}\subset \F_{q^n}
\end{equation*}
for any integer $\nu\geq 1$ we have
\begin{equation}\label{eq:subgroup}
 \A^{(\nu)}=\{a_1\dots a_\nu: a_1,\dots, a_\nu\in\A\}\subset \mathcal{G}_e
\end{equation}
thus $\A^{(\nu)}\leq e$. By \eqref{eq:m}, Theorem~\ref{lemma:main} yields $e\geq q^{ m(\nu +o(1))}$ which contradicts \eqref{eq:subgroup} since we have $\nu m > (2+o(1))\log e/\log q$ as $e\rightarrow \infty$ for the above choice of parameters.   We also note that with the above choice of $\nu$ we have $m\leq c_0 \frac{\delta^{1/(2d)}\log e}{\log q}$ for an absolute constant $c_0$, thus
\begin{equation*}
 \#\V_m=q^m\leq e^{c_0 \delta^{1/(2d)}}
\end{equation*}
which proves the result.
\end{proof}
\section*{Acknowledgment}

The author would like to thank Igor Shparlinski for suggesting the problem and
for his helpful comments. The author also thanks Arne Winterhof for the fruitful discussions.

The author is partially supported by the Austrian Science Fund (FWF): Project P31762.

\end{document}